\newtheorem*{maintheorem*}{Main Theorem}
\newtheorem{theorem}{Theorem}[section]
\newtheorem*{theorem*}{Main Theorem}
\newtheorem{prop}[theorem]{Proposition}
\newtheorem{lemma}[theorem]{Lemma}
\newtheorem{cor}[theorem]{Corollary}
\theoremstyle{definition}
\newtheorem{example}[theorem]{Example}
\numberwithin{equation}{section}
\newcommand{\nn}{\mathbb{N}}
\newcommand{\pp}{\mathbb{P}}
\newcommand{\qq}{\mathbb{Q}}
\newcommand{\rr}{\mathbb{R}}
\newcommand{\zz}{\mathbb{Z}}
\newcommand{\uu}{\mathcal{U}}
\providecommand\ldb{\llbracket}
\providecommand\rdb{\rrbracket}
\newcommand{\gp}{\mathcal{G}}
\newcommand{\gd}{\text{gd}}
\keywords{Atomic domains without the ascending chain conditions on principal ideals}
\subjclass[2010]{Primary: 13A05, 13F15; Secondary: 13A15, 13G05}
\begin{document}
	
\mbox{}
\title{Atomic semigroup rings and the ascending chain condition on principal ideals}

\author{Felix Gotti}
\address{Department of Mathematics\\MIT\\Cambridge, MA 02139}
\email{fgotti@mit.edu}

\author{Bangzheng Li}
\address{Christian Heritage School\\Trumbull, CT 06611}
\email{libz2003@outlook.com}

\date{\today}
	
\begin{abstract}
	An integral domain is called atomic if every nonzero nonunit element factors into irreducibles. On the other hand, an integral domain is said to satisfy the ascending chain condition on principal ideals (ACCP) if every ascending chain of principal ideals terminates. It was asserted by Cohn back in the sixties that every atomic domain satisfies the ACCP, but such an assertion was refuted by Grams in the seventies with an explicit construction of a neat example. Still, atomic domains without the ACCP are notoriously elusive, and just a few classes have been found since Grams' first construction. In the first part of this paper, we generalize Grams' construction to provide new classes of atomic domains without the ACCP. In the second part of this paper, we construct what seems to be the first atomic semigroup ring without the ACCP in the existing literature. 
\end{abstract}

\bigskip
\maketitle

\section{Introduction}
\label{sec:intro}

An integral domain is atomic if every nonzero nonunit factors into irreducibles, while an integral domain satisfies the ACCP if every ascending chain of principal ideals terminates. One can verify that every integral domain satisfying the ACCP is atomic. In particular, Noetherian domains are atomic. Further relevant classes of commutative rings, including Krull domains and Mori domains, satisfy the ACCP and are, therefore, atomic. Although the properties of being atomic and satisfying the ACCP are not equivalent in the context of integral domains, the distinction is subtle. In fact, the equivalence was asserted by P. Cohn~\cite{pC68} back in 1968. This wrong assertion was corrected by A. Grams~\cite{aG74} in 1974, with a construction of the first atomic domain without the ACCP.
\smallskip

Since then, the interplay between atomicity and the ACCP has been the subject of several papers (see there recent paper~\cite{BC19} and references therein). Yet, producing atomic domains that do not satisfy the ACCP has been challenging, and only a few constructions have been provided since Grams constructed the first example five decades ago. The second construction of an atomic domain without the ACCP was given by A. Zaks in~\cite{aZ82}, where it is proved that certain quotient of a given polynomial ring in infinitely many variables is atomic (this construction was suggested by Cohn, who pointed out that such a quotient does not satisfy the ACCP). In 1993, M. Roitman~\cite{mR93} constructed the first atomic domain $R$ whose ring of polynomials is not atomic, incidentally producing an atomic domain without the ACCP. More recently, J. Boynton and J. Coykendall~\cite{BC19} constructed a class of atomic domains without the ACCP using pullbacks of commutative rings.
\smallskip

In Section~\ref{sec:prelim}, we introduce the notation and remind the definitions and main results we will use throughout the paper. In Section~\ref{sec:Gram's generalization}, we briefly review Grams' construction of the first atomic domain without the ACCP and introduce the notion of atomization. 
Then, in Theorem~\ref{thm:generalized Grams example}, we provide a generalization of Grams' construction. The given generalization allows us to produce new atomic domains without the ACCP by localizing monoid algebras, where the main ingredients are rank-one torsion-free atomic monoids, which are not that hard to come by. We illustrate this with some examples.
\smallskip

Our primary purpose in Section~\ref{sec:atomic monoid algebra without ACCP} is to construct a monoid algebra (i.e., a monoid domain over a field) that is atomic but does not satisfy the ACCP, and we do so in Theorem~\ref{thm:atomic monoid algebra without the ACCP}. We have mentioned before all the references of constructions of atomic domains without the ACCP that we have found in the literature, and it is worth noticing that each of them uses some algebraic construction on rings, namely, quotients, localizations, direct unions, or pullbacks. In particular, none of the existing examples of atomic domains without the ACCP is as elementary as the monoid domain we exhibit in Theorem~\ref{thm:atomic monoid algebra without the ACCP}.
\smallskip


\bigskip
\section{Preliminary}
\label{sec:prelim}

\smallskip
\subsection{General Notation}

Following common notation, we let $\zz$, $\qq$, and $\rr$, denote the sets of integers, rational numbers, and real numbers, respectively. In addition, we let $\pp$, $\nn$ and $\nn_0$ denote the sets of primes, positive integers, and nonnegative integers, respectively. 
For $a,b \in \zz$, we let $\ldb a,b \rdb$ denote the discrete interval $\{n \in \zz \mid a \le n \le b\}$, allowing $\ldb a,b \rdb$ to be empty when $a > b$. In addition, given $S \subseteq \rr$ and $r \in \rr$, we set $S_{\ge r} = \{s \in S \mid s \ge r\}$ and $S_{> r} = \{s \in S \mid s > r\}$. For $q \in \qq \setminus \{0\}$, we let $\mathsf{n}(q)$ and $\mathsf{d}(q)$ denote, respectively, the unique $n \in \nn$ and $d \in \zz$ such that $q = n/d$ and $\gcd(n,d) = 1$. Accordingly, for any $Q \subseteq \qq \setminus \{0\}$, we set
\[
	\mathsf{n}(Q) = \{\mathsf{n}(q) \mid q \in Q\} \quad \text{ and } \quad \mathsf{d}(Q) = \{\mathsf{d}(q) \mid q \in Q\}.
\]
Finally, for each $p \in \pp$ and $n \in \zz \setminus \{0\}$, we let $v_p(n)$ denote the maximum $v \in \nn_0$ such that $p^v$ divides~$n$, and for $q \in \qq \setminus \{0\}$, we set $v_p(q) = v_p(\mathsf{n}(q)) - v_p(\mathsf{d}(q))$ (in other words, $v_p$ is the $p$-adic valuation map of~$\qq$ restricted to nonzero rationals).

\smallskip
\subsection{Monoids}

In the scope of this paper, a \emph{monoid} is a semigroup with identity that is both cancellative and commutative. Let $M$ be an additively written monoid. We let $M^\bullet$ denote the set of nonzero elements. In addition, we let $\mathcal{U}(M)$ denote the group of invertible elements of $M$, and we let $M_{\text{red}}$ denote the quotient monoid $M/\mathcal{U}(M)$. The monoid $M$ is called \emph{reduced} if $\mathcal{U}(M)$ is the trivial group, in which case, $M$ is naturally isomorphic to $M_{\text{red}}$. The \emph{difference group} of $M$, denoted by $\gp(M)$, is the unique abelian group up to isomorphism satisfying that any abelian group containing a homomorphic image of $M$ will also contain a homomorphic image of $\gp(M)$. The monoid $M$ is \emph{torsion-free} if $\gp(M)$ is a torsion-free group (or equivalently, if for all $a,b \in M$, if $na = nb$ for some $n \in \nn$, then $a=b$).
\smallskip

For a subset $S$ of $M$, we let $\langle S \rangle$ denote the submonoid of $M$ generated by $S$, that is, the smallest (under inclusion) submonoid of $M$ containing $S$. An \emph{ideal} of $M$ is a subset $I$ of $M$ such that $I + M \subseteq I$ (or, equivalently, $I + M = I$). An ideal of $M$ is \emph{principal} if there exists $b \in M$ satisfying $I = b + M$. For $b_1, b_2 \in M$, we say that $b_2$ \emph{divides} $b_1$ in $M$ if $b_1 + M \subseteq b_2 + M$, in which case we write $b_2 \mid_M b_1$, and we say that $b_1$ and $b_2$ are \emph{associates} if $b_1 + M = b_2 + M$. The monoid $M$ is a \emph{valuation monoid} if for any $b_1, b_2 \in M$ either $b_1 \mid_M b_2$ or $b_2 \mid_M b_1$. We say that $M$ satisfies the \emph{ascending chain condition on principal ideals} (\emph{ACCP}) if every increasing sequence (under inclusion) of principal ideals eventually terminates. An element $a \in M \! \setminus \! \uu(M)$ is an \emph{atom} (or an \emph{irreducible}) if whenever $a = u + v$ for some $u,v \in M$, then either $u \in \uu(M)$ or $v \in \uu(M)$. We let $\mathcal{A}(M)$ denote the set of atoms of~$M$. The monoid~$M$ is \emph{atomic} if every non-invertible element factors into atoms. One can check that every monoid satisfying the ACCP is atomic.

\smallskip
\subsection{Factorizations}

Observe that the monoid $M$ is atomic if and only if $M_{\text{red}}$ is atomic. We let $\mathsf{Z}(M)$ denote the free (commutative) monoid on $\mathcal{A}(M_{\text{red}})$, and we let $\pi \colon \mathsf{Z}(M) \to M_\text{red}$ be the unique monoid homomorphism fixing the set $\mathcal{A}(M_{\text{red}})$. For every $b \in M$, we set
\[
	\mathsf{Z}(b) = \mathsf{Z}_M(b) = \pi^{-1} (b + \uu(M)).
\]
Observe that $M$ is atomic if and only if $\mathsf{Z}(b)$ is nonempty for any $b \in M$. The monoid $M$ is called a \emph{finite factorization monoid} (\emph{FFM}) if it is atomic and $|\mathsf{Z}(b)| < \infty$ for every $b \in M$. In addition, $M$ is called a \emph{unique factorization monoid} (\emph{UFM}) if $|\mathsf{Z}(b)| = 1$ for every $b \in M$. By definition, every UFM is an FFM. If $z = a_1 \cdots a_\ell \in \mathsf{Z}(M)$ for some $a_1, \dots, a_\ell \in \mathcal{A}(M_{\text{red}})$, then $\ell$ is called the \emph{length} of $z$ and is denoted by $|z|$. For each $b \in M$, we set
\[
	\mathsf{L}(b) = \mathsf{L}_M(b) = \{ |z| \mid z \in \mathsf{Z}(b) \}.
\]
The monoid $M$ is called a \emph{bounded factorization monoid} (\emph{BFM}) if it is atomic and $|\mathsf{L}(b)| < \infty$ for all $b \in M$. Observe that if $M$ is an FFM, then it is also a BFM. On the other hand, the reader can verify that every BFM satisfies the ACCP (\cite[Corollary~1.4.4]{GH06}).
\smallskip

The set consisting of all nonzero elements of an integral domain $R$ is a monoid, which is denoted by~$R^*$ and called the \emph{multiplicative monoid} of $R$.  Every factorization property defined for monoids in the previous paragraph can be rephrased for integral domains. We say that $R$ is a \emph{unique} (resp., \emph{finite}, \emph{bounded}) \emph{factorization domain} provided that $R^*$ is a unique (resp., finite, bounded) factorization monoid. Accordingly, we use the acronyms UFD, FFD, and BFD. Observe that this new definition of a UFD coincides with the standard definition of a UFD. In order to simplify notation, we write $\mathsf{Z}(R) = \mathsf{Z}(R^\ast)$, and for every $x \in R^\ast$, we write $\mathsf{Z}(x) = \mathsf{Z}_{R^\ast}(x)$ and $\mathsf{Z}(x) = \mathsf{Z}_{R^\ast}(x)$. As for monoids, we let $\mathcal{A}(R)$ denote the set of atoms/irreducibles of $R$.
%

Let $R$ be an integral domain, and let $M$ be a torsion-free monoid. Following R. Gilmer~\cite{rG84}, we let $R[M]$ denote the monoid ring of $M$ over $R$, that is, the ring consisting of all polynomial expressions with exponents in $M$ and coefficients in $R$. It follows from \cite[Theorem~8.1]{rG84} that $R[M]$ is an integral domain. Accordingly, we often call $R[M]$ a monoid domain. In addition, it follows from \cite[Theorem~11.1]{rG84} that $R[M]^\times = \{rx^u \mid r \in R^\times \text{ and } u \in \uu(M)\}$. In light of \cite[Corollary~3.4]{rG84}, we can assume that $M$ is a totally ordered monoid. Let $f(x) = c_n x^{q_n} + \dots + c_1 x^{q_1}$ be a nonzero element in $R[M]$ for some coefficients $c_1, \dots, c_n \in R^*$ and exponents $q_1, \dots, q_n \in M$ satisfying $q_n > \dots > q_1$. Then we call $\deg f = \deg_{R[M]} f := q_n$ and $\text{ord} \, f = \text{ord}_{R[M]} \, f := q_1$ the \emph{degree} and the \emph{order} of $f$, respectively. In addition, we call the set $\text{supp} \, f = \text{supp}_{R[M]}(f(x)) := \{q_1, \dots, q_n\}$ the \emph{support} of $f$. 

\bigskip
\section{Generalized Grams' Construction}
\label{sec:Gram's generalization}


As we mentioned in the introduction, the first example of an atomic domain without the ACCP was constructed by Grams. The main purpose of this section is to generalize such construction. First, let us describe the integral domain given by Grams.

A torsion-free rank-one monoid that is not a group is called a Puiseux monoid. It follows from \cite[Theorem~3.12.1]{GGT21} that nontrivial submonoids of $(\qq_{\ge 0 },+)$ account for all Puiseux monoids up to isomorphism, and their atomicity has been systematically studied recently (see~\cite{CGG21} and references therein). Let $(p_n)_{n \in \nn}$ be the strictly increasing sequence consisting of odd primes, and consider the Puiseux monoid
\begin{equation}
	M := \Big\langle \frac{1}{2^n p_n} \ \Big{|} \ n \in \nn \Big\rangle.
\end{equation}
Let $F$ be a field, and let $S$ denote the multiplicative set $\{f \in F[M] \mid \text{ord} f = 0\}$ of the monoid domain $F[M]$. Then it follows from \cite[Theorem~1.3]{aG74} that the localization $F[M]_S$ of $F[M]$ at $S$ is an atomic domain, which does not satisfy the ACCP because the ascending chain of principal ideals $\big( x^{1/2^n} F[M]_S \big)_{n \in \nn}$ does not terminate. Honoring Grams, we call $M$ the \emph{Grams monoid} and $F[M]_S$ the \emph{Grams domain} over $F$. The fact that $M$ contains the valuation monoid $N := \langle 1/2^n \mid n \in \nn \rangle$ as a submonoid plays an important role. The second crucial property that makes Grams' construction work is that every element of~$M$ has a largest divisor in~$N$.

To formalize and generalize the last two observations, let $M$ be a monoid, and let $N$ be a submonoid of $M$. For each $m \in M$, a \emph{greatest divisor} of $m$ in~$N$ is an element $d \in N$ satisfying the following two properties:
\begin{itemize}
	\item $d \mid_M m$ and
	\smallskip
	
	\item if $d' \mid_M m$ for some $d' \in N$, then $d' \mid_M d$.
\end{itemize}
Clearly, any two greatest divisors in $N$ of the same element of $M$ must be associates, and so if $M$ is reduced, then every element of $M$ has at most one greatest divisor in $N$. We say that $N$ is a \emph{greatest-divisor} submonoid of $M$ provided that every element of $M$ has a greatest divisor in $N$. Assume now that $M$ is reduced, and also that $N$ is a greatest-divisor submonoid of $M$. We let $\text{gd}_N(m)$ denote the greatest divisor of $m$ in $N$. The following observations can be deduced directly from the definition of a greatest divisor:
\begin{itemize}
	\item $\gd_N(x - \gd_N(x)) = 0$;
	\smallskip
	
	\item if $x \mid_M y$, then $\gd_N(x) \mid_M \gd_N(y)$.
\end{itemize}
%
%
\smallskip

Let $N = \langle q_n \mid n \in \nn \rangle$ be a Puiseux monoid generated by a sequence $(q_n)_{n \in \nn}$ consisting of positive rationals, and let $(p_n)_{n \in \nn}$ be a sequence of primes whose terms are pairwise distinct such that $\gcd(p_i, \mathsf{n}(q_i)) = \gcd(p_i, \mathsf{d}(q_j)) = 1$ for all $i,j \in \nn$. We call the monoid
\[
	M := \Big\langle \frac{q_n}{p_n} \ \Big{|} \ n \in \nn \Big\rangle
\]
an \emph{atomization} of $N$ at the sequence $(p_n)_{n \in \nn}$. Observe that an atomization of $N$ not only depends on the sequence of primes $(p_n)_{n \in \nn}$ but also on the chosen generating set of~$N$.

\begin{prop} \label{prop:atomization and divisor-submonoids}
	Let $N = \langle q_n \mid n \in \nn \rangle$ be a Puiseux monoid with $q_n > 0$ for every $n \in \nn$, and let $(p_n)_{n \in \nn}$ be a sequence of pairwise distinct primes such that $\gcd(p_i, \mathsf{n}(q_i)) = \gcd(p_i, \mathsf{d}(q_j)) = 1$ for all $i,j \in \nn$. Then the following statements hold.
	\begin{enumerate}
		\item The atomization $M := \big\langle \frac{q_n}{p_n} \mid n \in \nn \big\rangle$ of $N$ is atomic with $\mathcal{A}(M) = \big\{ \frac{q_n}{p_n} \mid n \in \nn \big\}$.
		\smallskip
		
		\item $N$ is a greatest-divisor submonoid of $M$.
	\end{enumerate}
\end{prop}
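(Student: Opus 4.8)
The plan is to push everything through the $p_n$-adic valuations $v_{p_n}$. Write $a_n := \frac{q_n}{p_n}$ for each $n \in \nn$, so that $M = \langle a_n \mid n \in \nn \rangle$ is a submonoid of $(\qq_{\ge 0}, +)$; in particular $M$ is reduced, and $N$ is a submonoid of $M$ because $q_n = p_n a_n \in M$. The coprimality hypotheses give $v_{p_n}(\mathsf{n}(q_n)) = v_{p_n}(\mathsf{d}(q_n)) = 0$, hence $v_{p_n}(q_n) = 0$ and $v_{p_n}(a_n) = -1$; on the other hand, for $k \ne n$ one has $v_{p_n}(p_k) = 0$ (distinctness of the primes) and $v_{p_n}(\mathsf{d}(q_k)) = 0$, so $v_{p_n}(a_k) = v_{p_n}(\mathsf{n}(q_k)) \ge 0$, and similarly $v_{p_n}(a) \ge 0$ for every $a \in N$. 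Using $v_p(x+y) \ge \min\{v_p(x), v_p(y)\}$, it follows that every $m \in M$ has $v_{p_n}(m) \ge -1$, and that for any representation $m = \sum_k c_k a_k$ with coefficients in $\nn_0$ one has $v_{p_n}(m) = -1$ if and only if $p_n \nmid c_n$.

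For part (1) I would first prove that each $a_n$ is an atom. If $a_n = x + y$ with $x, y \in M$, then since $v_{p_n}(a_n) = -1$ and $v_{p_n} \ge -1$ on $M$, one of the summands, say $x$, has $v_{p_n}(x) = -1$; by the previous paragraph the coefficient of $a_n$ in any representation of $x$ is positive, so $x \ge a_n$, and since $y \ge 0$ this forces $x = a_n$ and $y = 0 \in \uu(M)$. Thus $a_n \in \mca(M)$. Because $M$ is generated by the atoms $a_n$, it is atomic; and since any element of $M$ that is a sum of at least two of the generators $a_k$ is reducible, every atom equals some $a_n$, giving $\mca(M) = \{a_n \mid n \in \nn\}$.

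The crux of part (2) --- and the step I expect to be the real obstacle --- is that representations of an element of $M$ in terms of the $a_k$ need not be unique, so one must control how much they can differ. I would establish a uniqueness-modulo-$p_k$ statement: if $\sum_k c_k a_k = \sum_k c'_k a_k$ with all coefficients in $\nn_0$ (and finitely many nonzero), then $c_k \equiv c'_k \pmod{p_k}$ for every $k$. The proof isolates the $k$-th term, $(c_k - c'_k) a_k = -\sum_{j \ne k}(c_j - c'_j) a_j$, notes that the right-hand side has $p_k$-adic valuation $\ge 0$ (as $v_{p_k}(a_j) \ge 0$ for $j \ne k$ and integers have $v_{p_k} \ge 0$), and then deduces from $v_{p_k}(a_k) = -1$ that $v_{p_k}(c_k - c'_k) \ge 1$, i.e. $p_k \mid c_k - c'_k$. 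It is precisely here that the coprimality hypotheses do the work.

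Finally, for part (2): given $m \in M$ and a representation $m = \sum_k c_k a_k$, write $c_k = p_k e_k + r_k$ with $e_k \in \nn_0$ and $0 \le r_k < p_k$, and set $d := \sum_k e_k q_k \in N$; then $m - d = \sum_k r_k a_k \in M$, so $d \mid_M m$. For the maximality, take any $d' \in N$ with $d' \mid_M m$, write $d' = \sum_k f_k q_k$ and $m - d' = \sum_k g_k a_k$ with $f_k, g_k \in \nn_0$, so that $m = \sum_k (p_k f_k + g_k) a_k$ is another representation of $m$. The uniqueness statement gives $g_k \equiv c_k \equiv r_k \pmod{p_k}$, and since $g_k \ge 0$ while $r_k < p_k$ this forces $g_k \ge r_k$; hence $d - d' = (m - d') - (m - d) = \sum_k (g_k - r_k) a_k$ is a nonnegative integer combination of the $a_k$, so $d - d' \in M$, i.e. $d' \mid_M d$. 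Therefore $d = \gd_N(m)$, and as $m$ was arbitrary, $N$ is a greatest-divisor submonoid of $M$.
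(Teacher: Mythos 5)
Your proposal is correct and follows essentially the same route as the paper: both arguments rest on $p_n$-adic valuations, the canonical decomposition of an element of $M$ into an $N$-part plus a combination $\sum_k r_k \frac{q_k}{p_k}$ with $0 \le r_k < p_k$, and the uniqueness of the coefficients modulo $p_k$. Your explicit coefficient comparison in the maximality step is just a rephrasing of the paper's identity $\nu(b) = \nu(b-d) + d$, so there is nothing substantive to flag.
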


\begin{proof}
	(1) It suffices to verify that $\mathcal{A}(M) = \big\{ \frac{q_n}{p_n} \mid n \in \nn\}$. This is indeed the case: observe that if $q_j/p_j = \sum_{i=1}^n c_i q_i/p_i$ for some $c_1, \dots, c_n \in \nn$, then after taking $p_i$-adic valuations (for all $i \in \ldb 1,n \rdb$) in both sides of this equality we obtain that $c_j = 1$ and $c_i = 0$ for every $i \in \ldb 1,n \rdb \setminus \{j\}$. 
	\smallskip
	
	(2) 
	We first observe that for each $b \in M$, there exist coefficients $c_n \in \ldb 0, p_n - 1 \rdb$ for all $n \in \nn$ (only finitely many of them being different from $0$) such that
	\begin{equation} \label{eq:atomization equation}
		b = \nu(b) + \sum_{n \in \nn} c_n \frac{q_n}{p_n},
	\end{equation}
	where $\nu(b) \in N$. We claim that $\nu(b)$ and the coefficients $c_n$ in the decomposition~\eqref{eq:atomization equation} are uniquely determined. To argue this, suppose that
	\begin{equation} \label{eq:atomization equation II}
		\nu(b) + \sum_{n \in \nn} c_n \frac{q_n}{p_n} = \mu(b) + \sum_{n \in \nn} d_n \frac{q_n}{p_n}
	\end{equation}
	for some $\mu(b) \in N$ and coefficients $d_n \in \ldb 0, p_n - 1 \rdb$, all but finitely many of them being zero. For each $n \in \nn$, we can take $p_n$-adic valuation on both sides of~\eqref{eq:atomization equation II} to see that $c_n \equiv d_n \pmod{p_n}$, which implies that $c_n = d_n$. Therefore $\nu(b) = \mu(b)$ and the claimed uniqueness follows.
	
	We proceed to argue that $N$ is a greatest-divisor submonoid of $M$. For each $b \in M$, we verify that $\nu(b)$ is the greatest divisor of $b$ in $N$. Clearly, $\nu(b) \mid_M b$. Suppose now that $d \in N$ also satisfies $d \mid_M b$. Then after writing $b - d$ as in~\eqref{eq:atomization equation}, the uniqueness of the decomposition will guarantee that $\nu(b) = \nu(b-d) + d$, which implies that $d \mid_M \nu(b)$. As a result, $\nu(b)$ is the greatest divisor of $b$ in $N$. Hence $N$ is a greatest divisor submonoid of $M$. 
%
\end{proof}

Let us take a second look at the Grams monoid from a different point of view.

\begin{example} \label{ex:Grams monoid atomization and greatest-divisor submonoid}
	Consider the Puiseux monoid $N := \big\langle \frac 1{2^n} \mid n \in \nn \rangle$, and let $(p_n)_{n \in \nn}$ be the strictly increasing sequence whose terms are the odd prime numbers. Then we can recover the Grams monoid as an atomization of $N$ (with respect to the defining generating set) at the sequence $(p_n)_{n \in \nn}$. Therefore it follows from Proposition~\ref{prop:atomization and divisor-submonoids} that the Grams monoid is atomic with $\mathcal{A}(M) = \big\{ \frac{1}{2^n p_n} \mid n \in \nn \big\}$ and also that $M$ contains the valuation monoid $N$ as a greatest-divisor submonoid.
%
\end{example}

We proceed to establish the main result of this section.

\begin{theorem} \label{thm:generalized Grams example}
	Let $F$ be a field, and let $M$ be an atomic reduced torsion-free monoid. Also, let $N$ be a submonoid of $M$ satisfying the following conditions:
	\begin{itemize}
		\item[(1)] $N$ is a valuation greatest-divisor submonoid of $M$, and
		\smallskip
		
		\item[(2)] $\mathsf{L}_M(m - \emph{gd}_N(m))$ is finite for every $m \in M$.
	\end{itemize}
	Then $F[M]_S$ is atomic, where $S = \{f \in F[M] \mid f(0) \neq 0\}$.
\end{theorem}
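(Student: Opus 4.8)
The plan is to show that every nonzero nonunit of $F[M]_S$ factors into irreducibles. First I would set up the right normal forms. Since $M$ is reduced and torsion-free, by \cite[Corollary~3.4]{rG84} we may assume $M$ is totally ordered, and every element $f \in F[M]_S$ is associate in $F[M]_S$ to some $g \in F[M]$ with $\mathrm{ord}\, g = 0$; indeed we can divide out the monomial $x^{\mathrm{ord}\, f}$ (a unit in $F[M]_S$ because $N$ — hence $M$ — is torsion-free and $\mathrm{gd}_N$ is available, but more simply because $x^u$ is a unit in $F[M]_S$ for every $u \in M$ if and only if $\ldots$) — actually the cleanest route: since $S$ consists of the elements with nonzero constant term, $F[M]_S$ has as units exactly $\{rg/h : r \in F^\times,\ g,h \in S\}$ together with the monomials $x^u$, $u \in M$, because each $x^u$ divides some element of $S$ after clearing... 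Let me instead reduce to studying factorizations in $F[M]$ of elements with order $0$, and track how lengths behave. The key invariant is $\mathrm{gd}_N(\mathrm{ord}\, g)$, or rather, for $g = \sum c_i x^{q_i}$ with $q_1 < \cdots < q_n$, the quantity $\mathrm{gd}_N(q_1)$ measured against $\deg g$.

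The main steps I would carry out are: \textbf{(i)} Reduce the problem to factoring, in $F[M]$, a polynomial $g$ with $g(0) \neq 0$, i.e.\ with $\mathrm{ord}\, g = 0$; here using that $F[M]_S^\times$ contains all monomials $x^u$ and that every $f \in F[M]_S$ equals $x^{-u} g$ for suitable $u \in M$ and $g \in F[M]$ with $\mathrm{ord}\, g = 0$. \textbf{(ii)} Define a length-type function: for $g \in F[M]$ with $\mathrm{ord}\, g = 0$, consider all the "orders" that can appear among nonconstant irreducible-in-$F[M]_S$ factors of $g$. Using that $N$ is a valuation monoid, the submonoid $N$ is totally ordered by divisibility, so $\mathrm{gd}_N$ of any element of $M$ lives in a chain; this lets me compare the "$N$-parts" of the supports of any candidate factorization. \textbf{(iii)} Show that a factorization of $g$ in $F[M]_S$ cannot refine indefinitely: each nontrivial factor strictly decreases either $\deg$ (measured in $\gp(M)$, a totally ordered abelian group, so this is a well-behaved discrete-like quantity after bounding denominators) or some finite datum controlled by hypothesis (2). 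The hypothesis $\mathsf{L}_M(m - \mathrm{gd}_N(m)) < \infty$ is exactly what prevents the "$N$-free part" from being split infinitely often, while the valuation/greatest-divisor property of $N$ handles the "$N$-part": any element of $M$ whose greatest divisor in $N$ is $0$ and which is itself nonzero has finitely many factorization lengths, so it contributes only boundedly to any factorization.

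The hard part, I expect, is step \textbf{(iii)}: combining the two hypotheses into a single bounded factorization-length argument for elements of $F[M]_S$. Specifically, given $g$ with $\mathrm{ord}\, g = 0$ and a factorization $g = g_1 \cdots g_k$ in $F[M]_S$, each $g_j$ may be normalized so $\mathrm{ord}\, g_j = 0$; but then $\sum_j \deg g_j = \deg g$ in $\gp(M)$, and I must argue that the $\deg g_j$ cannot be made arbitrarily small in number-theoretic denominator while summing to a fixed value — this is where $M$ being an atomization (rank-one-type behavior controlled through $N$) matters, and in the general hypotheses it must be extracted from condition (2) applied to $\deg g - \mathrm{gd}_N(\deg g)$ after passing from orders to degrees by the symmetry $g \mapsto x^{\deg g} g(x^{-1})$ (valid since $M$ embeds in the totally ordered group $\gp(M)$, so "reversal" makes sense on supports). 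Once the number of nonunit factors is bounded, atomicity follows: any factorization of maximal length is into irreducibles. I would close by verifying the reversal trick is legitimate here (it needs $\mathrm{gd}_N$ to be compatible with the order-reversing involution, which follows from $N$ being a valuation submonoid, hence symmetric under no extra hypotheses — this must be checked) and assembling the bound.
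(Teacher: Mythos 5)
Your proposal has a fatal structural error at step (i): the monomials $x^u$ with $u \in M^\bullet$ are \emph{not} units in $F[M]_S$. An element of $S$ has order $0$, so if $x^u$ were invertible in $F[M]_S$ we would have $x^u f = s$ for some $f \in F[M]$ and $s \in S$, forcing $u + \operatorname{ord} f = 0$ in the reduced monoid $M$, which is impossible for $u \neq 0$. Indeed, the entire point of the construction is that $x^a$ is a nonunit \emph{irreducible} of $F[M]_S$ for every $a \in \mathcal{A}(M)$ (this is the first thing the paper proves, via the order map), and the failure of the ACCP in the motivating examples comes precisely from chains of principal ideals generated by such monomials. With this misconception the reduction to order-$0$ polynomials collapses; note also that even the weaker reduction $f \mapsto f/x^{\operatorname{ord} f}$ is unavailable, since $\operatorname{ord} f$ need not divide the other exponents of $f$ in $M$.

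This error propagates into step (iii): you are implicitly trying to bound the number of nonunit factors of an arbitrary element of $F[M]_S$, i.e., to prove a BFD-type statement, but $F[M]_S$ is not a BFD in the intended applications --- a monomial $x^q$ with $q \in N$ typically has unbounded factorization lengths (in the Grams monoid, $\mathsf{L}_M(1/2)$ is infinite). The paper's proof avoids this by splitting $f = x^q g$ with $q = \min_j \operatorname{gd}_N(m_j)$, where the $m_j$ are the exponents of $f$; the valuation hypothesis on $N$ guarantees that this minimum exists and that $q$ divides every $m_j$ in $M$, so $g \in F[M]$. The monomial $x^q$ is then factored into irreducibles using only the atomicity of $M$ and the irreducibility of $x^a$ for $a \in \mathcal{A}(M)$ (no length bound needed), while the bounded-length argument is applied only to $g$: in any factorization $g = \prod_i g_i/s_i$ the coefficient of $x^{m_k - q}$ (with $k$ the largest index achieving the minimum) survives because $\operatorname{gd}_N(m_k - q) = 0$, yielding $m_k - q = q_1 + \cdots + q_\ell$ with each $q_i \in M^\bullet$ and hence $\ell \le \max \mathsf{L}_M(m_k - q)$, which is finite by hypothesis (2). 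You correctly identify the two hypotheses as controlling the ``$N$-part'' and the ``$N$-free part'' respectively, but without the $x^q$-versus-$g$ decomposition and the coefficient-survival argument, the proof cannot be assembled; the proposed reversal trick $g \mapsto x^{\deg g}g(x^{-1})$ plays no role in a correct argument.
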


\begin{proof}
	We argue first that $X^a$ is irreducible in $F[M]_S$ for all $a \in \mathcal{A}(M)$. To do so, take $a \in \mathcal{A}(M)$, and suppose that $X^a = \frac{f_1}{s_1} \cdot \frac{f_2}{s_2}$ for some $f_1, f_2 \in F[M]$ and $s_1, s_2 \in S$. Then $X^a s_1 s_2 = f_1 f_2$ and, therefore, $\text{ord} \, f_1 + \text{ord} \, f_2 = \text{ord} \, f_1 f_2 = a$. Since $\text{ord} \, f_1$ and $\text{ord} \, f_2$ both belong to $M$ and $a \in \mathcal{A}(M)$, it follows that either $\text{ord} \, f_1 = 0$ or $\text{ord} \, f_2 = 0$, which implies that either~$f_1$ or $f_2$ belongs to $S$. Hence $X^a \in \mathcal{A}(F[M]_S)$.
	\smallskip
	
	In order to prove that $F[M]_S$ is atomic, it suffices to show that every nonzero nonunit in $F[M]$ factors into irreducibles in $F[M]_S$. Take a nonzero nonunit $f \in F[M]$ and write $f = \sum_{j=1}^n r_j X^{m_j}$ assuming that $m_1 > \dots > m_n$ and $r_1 \cdots r_n \neq 0$. Now set
	\[
		q = \min \{\gd_N(m_j) \mid j \in \ldb 1,n \rdb\} \quad \text{ and } \quad k = \max \{ j \in \ldb 1,n \rdb \mid \gd_N(m_j) = q \}.
	\]
	Since $N$ is a valuation monoid, we can write $f = X^q g$, where $g = \sum_{j=1}^n r_j X^{m_j - q} \in F[M]$. Let us argue that both $X^q$ and $g$ can be factored into irreducibles in $F[M]_S$. If $X^q$ does not belong to $F[M]^\times$, then $q \in M^\bullet$, and so the atomicity of $M$, in tandem with the fact that $X^a \in \mathcal{A}(F[M]_S)$ for all $a \in \mathcal{A}(M)$, ensures that $X^q$ factors into irreducibles in $F[M]_S$.
	\smallskip
	
	Let us prove now that $g$ also factors into irreducibles in $F[M]_S$. To do this, write $g = \frac{g_1}{s_1} \cdots \frac{g_\ell}{s_\ell}$ for some nonunits $g_1, \dots, g_\ell \in F[M]$ and $s_1, \dots, s_\ell \in S$. Then $g s_1 \cdots s_\ell = g_1 \cdots g_\ell$ in $R[M]$. Since $\text{gd}_N(m_k - q) = 0$, for each $i \in \ldb k+1, n \rdb$ the fact that $\text{gd}_N(m_i - q) > 0$ implies that $m_i - q$ cannot divide $m_k - q$ in $M$. As a result, the coefficient of $X^{m_k - q}$ in the polynomial expression $g s_1 \cdots s_\ell$ is $r_k s_1(0) \cdots s_\ell(0)$, which is different from $0$ because $s_1, \dots, s_\ell \in S$. Therefore there are $q_1, \dots, q_\ell \in M$ with $q_i \in \text{supp} \, g_i$ for every $i \in \ldb 1, \ell \rdb$ such that $m_k - q = q_1 + \dots + q_\ell$. For each $i \in \ldb 1, \ell \rdb$, the facts $q_i \in \text{supp} \, g_i$ and $g_i \notin S$ guarantee that $\ell \le \max \mathsf{L}_M(m_k - q)$, which is finite. Now, after assuming that~$\ell$ was taken as large as it could possible be, we find that $\frac{g_1}{s_1}, \dots, \frac{g_\ell}{s_\ell} \in \mathcal{A}(F[M]_S)$, whence $f$ factors into irreducibles in $F[M]_S$. Hence $F[M]_S$ is atomic.
\end{proof}

	With the notation as in Theorem~\ref{thm:generalized Grams example}, the following corollary can be used as a tool to construct atomic integral domains that do not satisfy the ACCP.
	
\begin{cor} \label{cor:Grams' generalized construction}
	Let $F$ be a field, and let $M$ be a monoid satisfying the conditions in Theorem~\ref{thm:atomic monoid algebra without the ACCP}. If $M$ does not satisfy the ACCP, then $F[M]_S$ is an atomic integral domain that does not satisfy the ACCP.
\end{cor}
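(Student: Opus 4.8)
The plan is to treat the two conclusions separately. That $F[M]_S$ is an atomic integral domain is exactly the content of Theorem~\ref{thm:generalized Grams example}: the conditions imposed on $M$ are precisely those of that theorem, and $S = \{f \in F[M] \mid f(0) \neq 0\}$ is the same multiplicative set, so nothing new is needed for that half. Hence the whole task is to transfer the failure of the ACCP from the monoid $M$ to the monoid domain $F[M]_S$. Concretely, I would start from a strictly ascending chain of principal ideals $b_1 + M \subsetneq b_2 + M \subsetneq \cdots$ in $M$, which exists precisely because $M$ does not satisfy the ACCP. For each $n$, the inclusion $b_n + M \subseteq b_{n+1} + M$ says that $b_{n+1} \mid_M b_n$, i.e.\ $b_n = b_{n+1} + c_n$ for some $c_n \in M$; since $M$ is reduced and the inclusion is strict, $c_n \neq 0$. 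Consequently $X^{b_n} = X^{c_n} X^{b_{n+1}} \in X^{b_{n+1}} F[M]_S$, so $\big( X^{b_n} F[M]_S \big)_{n \in \nn}$ is an ascending chain of principal ideals of $F[M]_S$.

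It remains to show that this chain never stabilizes, i.e.\ that $X^{b_{n+1}} F[M]_S \not\subseteq X^{b_n} F[M]_S$ for every $n$. Assume otherwise for some $n$ and write $X^{b_{n+1}} = X^{b_n} f/s$ with $f \in F[M] \setminus \{0\}$ and $s \in S$; clearing denominators gives the identity $s\, X^{b_{n+1}} = f\, X^{b_n}$ in $F[M]$. Since multiplying a nonzero polynomial by a monomial merely translates its support (no cancellation occurs, by cancellativity of $M$), comparing supports yields $b_{n+1} + \supp s = b_n + \supp f$. Now $s \in S$ forces $0 \in \supp s$, hence $b_{n+1} \in b_n + \supp f \subseteq b_n + M$, that is, $b_n \mid_M b_{n+1}$. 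Together with $b_{n+1} \mid_M b_n$ and the antisymmetry of $\mid_M$ in the reduced monoid $M$, this gives $b_n = b_{n+1}$, contradicting $b_n + M \subsetneq b_{n+1} + M$. Therefore $\big( X^{b_n} F[M]_S \big)_{n \in \nn}$ is a non-stabilizing ascending chain of principal ideals, and $F[M]_S$ does not satisfy the ACCP.

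The argument is short, and its only slightly delicate point is the strictness step: one must rule out the reverse containment of consecutive principal ideals, and the mechanism that makes this possible is that every element of $S$ carries the identity of $M$ in its support, so multiplying $X^{b_{n+1}}$ by an element of $S$ cannot move the exponent $b_{n+1}$ outside of $b_n + M$. Everything else is routine bookkeeping with supports of monomial multiples and with divisibility in a reduced monoid; note in particular that conditions (1) and (2) of Theorem~\ref{thm:generalized Grams example} enter only through their role in guaranteeing atomicity and play no part in the ACCP half.
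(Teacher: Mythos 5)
Your proof is correct and matches the argument the paper intends: the paper states this corollary without proof, but in Examples~\ref{ex:Grams monoid has a greatest-divisor submonoid} and~\ref{ex:rational cyclic semirings} it uses exactly your mechanism, lifting a non-terminating chain $(b_n + M)$ in $M$ to the chain $\big(X^{b_n}F[M]_S\big)$ of principal ideals. Your explicit verification of strictness (via $0 \in \supp s$ for $s \in S$, so that $b_n \mid_M b_{n+1}$ would force $b_n + M = b_{n+1} + M$) is a detail the paper leaves implicit, and you correctly read the theorem reference in the statement as Theorem~\ref{thm:generalized Grams example}.
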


Now we use Corollary~\ref{cor:Grams' generalized construction} to exhibit new examples of atomic domains without the ACCP.

\begin{example} \label{ex:Grams monoid has a greatest-divisor submonoid}
	Let $N$ be a Puiseux monoid that is also a valuation monoid (that is, a seminormal Puiseux monoid by \cite[Proposition~3.1]{GGT21}), and assume that $N$ admits an atomization $M$. It follows from Proposition~\ref{prop:atomization and divisor-submonoids} that $N$ is a greatest-divisor submonoid of $M$. In addition, from the uniqueness of the decomposition~\eqref{eq:atomization equation}, we can infer that for any $b \in M$ the element $b - \gd_N(b)$ has a unique factorization in $M$, and so $|\mathsf{L}_M(b - \gd_N(b))| = 1$. Therefore if $F$ is a field and $S = \{f \in F[M] \mid f(0) \neq 0\}$, then it follows from Theorem~\ref{thm:atomic monoid algebra without the ACCP} that $F[M]_S$ is an atomic domain. Now if we choose $N$ to be a non-finitely generated valuation monoid (for instance, $N = \big\langle \frac 1{d^n} \mid n \in \nn \big\rangle$ for some $d \in \nn_{\ge 2}$), then neither~$N$ nor~$M$ satisfy the ACCP, and so Corollary~\ref{cor:Grams' generalized construction} guarantees that $F[M]_S$ is an atomic domain that does not satisfy the ACCP. In particular, we obtain that the Grams domain is an atomic domain without the ACCP.
\end{example}

To obtain further examples of atomic domains without the ACCP, we can use Theorem~\ref{thm:atomic monoid algebra without the ACCP} on monoids that cannot be produced using atomization. The following example illustrates this.

\begin{example} \label{ex:rational cyclic semirings}
	Take $a,b \in \nn_{\ge 2}$ such that $a < b$ and $\gcd(a,b) = 1$. Now consider the Puiseux monoid $M = \langle q^n \mid n \in \nn_0 \rangle$, where $q = a/b$. It is not hard to verify that $M$ is atomic. In addition, observe that $\nn_0$ is a submonoid of $M$. We will argue that $\nn_0$ is indeed a greatest-divisor submonoid of $M$. To do this, fix $b \in M$. By virtue of \cite[Lemma~3.1]{CGG20}, we can uniquely write $b = \nu(b) + \sum_{n \in \nn} c_n q^n$ under the constrains $\nu(b) \in \nn_0$ and $c_n \in \ldb 0, b-1 \rdb$ for every $n \in \nn$, where all but finitely many of the terms in $\sum_{n \in \nn} c_n q^n$ equal zero. Mimicking the last two paragraphs in the proof of Proposition~\ref{prop:atomization and divisor-submonoids}, one can verify the uniqueness of the decomposition $b = \nu(b) + \sum_{n \in \nn} c_n q^n$ and, as a consequence, the equality $\nu(b) = \gd_{\nn_0}(b)$. Hence $\nn_0$ is a greatest-divisor submonoid of $M$. Thus, if $F$ is a field and $S := \{f \in F[M] \mid f(0) \neq 0\}$, then $F[M]_S$ is an atomic domain by virtue of Theorem~\ref{thm:atomic monoid algebra without the ACCP}. Since $bq^n = (b-a)q^n + b q^{n+1}$ for every $n \in \nn_0$, the sequence $\big( X^{bq^n}F[M]_S \big)_{n \in \nn}$ is an ascending chain of principal ideals of $F[M]_S$ that does not terminate. Hence $F[M]_S$ does not satisfy the ACCP.
\end{example}

\bigskip
\section{Atomic Semigroup Rings without the ACCP}
\label{sec:atomic monoid algebra without ACCP}


%

The primary purpose of this section is to construct a new class of atomic monoid algebras that do not satisfy the ACCP. In order to do so, we consider monoid domains with coefficients in a field and exponents in the nonnegative ray of $\rr$. Several classes of atomic monoid domains with coefficients in a field and exponents in the nonnegative ray of $\qq$ were recently considered by the first author in~\cite{fG21}. However, every atomic monoid domain considered in the mentioned paper satisfies the ACCP.

In what follows, we shall assume that for every sequence $(r_n)_{n \in \nn_0}$ of real numbers, $\sum_{n=k}^\ell r_n = 0$ provided that $k > \ell$. Let $(\alpha_n)_{n \in \nn}$ be a sequence of pairwise distinct positive irrational numbers such that the set $\{1, \alpha_n \mid n \in \nn\}$ is linearly independent over $\qq$ and $\sum_{n=1}^\infty \alpha_n < \frac 12$. 
Now consider the set
\[
	A := \bigg\{ \alpha_{j_k} + \sum_{i=1}^\ell \alpha_{j_i} \ \bigg{|} \ \ell, j_1, \dots, j_\ell \in \nn, k \in \ldb 1, \ell \rdb, \text{ and } j_1 < \dots < j_\ell \bigg\},
\]
that is, $A$ is the set consisting of all possible finite summations of the terms of $(\alpha_n)_{n \in \nn}$ where exactly one of the terms appears twice while the rest appear at most once. In addition, take $\beta_0 = 1$ and set
\[
	B := \{\beta_0\} \bigcup \bigg\{ \beta_\ell := 1 - \sum_{i=1}^\ell \alpha_i \ \bigg{|} \, \, \ell \in \nn \bigg\}.
\]
Since $\sum_{n=1}^\infty \alpha_n < \frac 14$, we see that $\alpha < \frac 12$ for each $\alpha \in A$ and $\frac 12 < \beta < 1$ for each $\beta \in B \setminus \{\beta_0\}$. For the rest of this section, we let~$M$ be the monoid generated by $A \cup B$.

\begin{prop} \label{prop:monoid atomic without ACCP}
	The monoid $M$ is atomic with $\mathcal{A}(M) = A \cup B$. In addition, $M$ does not satisfy the ACCP.
\end{prop}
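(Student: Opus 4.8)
The plan is to first determine $\mathcal{A}(M)$, then deduce atomicity, and finally exhibit a non-stabilizing chain of principal ideals. Since the generators of $M$ are positive real numbers, $M$ is a reduced submonoid of $(\rr_{\ge 0},+)$; hence every atom of $M$ lies in any generating set, and in particular $\mathcal{A}(M) \subseteq A \cup B$. It therefore suffices to prove the reverse inclusion, because $M$ is generated by $A \cup B$, so once $A \cup B \subseteq \mathcal{A}(M)$ it follows that $M$ is atomic.

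To show that each generator is an atom, I would use the $\qq$-linear independence of $\{1\} \cup \{\alpha_n \mid n \in \nn\}$ to equip every element of $M$ with well-defined integer coordinates relative to this family (a coordinate along $1$ and one along each $\alpha_n$). The generators in $A$ have $1$-coordinate $0$ and nonnegative $\alpha$-coordinates, whereas the generators in $B$ have $1$-coordinate $1$. The key observation is: if $x \in \langle A \rangle \setminus \{0\}$, and we write $|x|$ for the sum of the $\alpha$-coordinates of $x$ and $s(x)$ for the number of nonzero $\alpha$-coordinates of $x$, then $|x| - s(x) \ge 1$, and consequently some $\alpha$-coordinate of $x$ is at least $2$. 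This is proved by writing $x$ as a sum of $r \ge 1$ generators from $A$, noting that each such generator $w$ satisfies $|w| - s(w) = 1$ (it has a single doubled coordinate, and its remaining support consists of $1$'s), and using that supports are subadditive under addition.

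With this at hand the two cases are short. If $a \in A$ and $a = x + y$ with $x,y \in M \setminus \{0\}$, then comparing $1$-coordinates forces $x, y \in \langle A \rangle \setminus \{0\}$; since $|a| = s(a) + 1$, $|a| = |x| + |y|$, and $s(a) \le s(x) + s(y)$ (the support of $a$ is the union of those of $x$ and $y$), we get $2 \le (|x| - s(x)) + (|y| - s(y)) = |a| - (s(x) + s(y)) \le |a| - s(a) = 1$, a contradiction. If $\beta \in B$ and $\beta = x + y$ with $x,y \in M \setminus \{0\}$, then comparing $1$-coordinates we may assume the $1$-coordinate of $x$ is $0$, so $x \in \langle A \rangle \setminus \{0\}$ and some $\alpha_n$-coordinate of $x$ is at least $2$; then the $\alpha_n$-coordinate of $y = \beta - x$ is at most $-2$ because every $\alpha$-coordinate of $\beta$ is at most $0$, which is impossible, since $y$ has $1$-coordinate $1$ and hence involves exactly one generator from $B$, so every $\alpha$-coordinate of $y$ is at least $-1$. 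This yields $\mathcal{A}(M) = A \cup B$ and the atomicity of $M$.

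For the failure of the ACCP, observe that $2\alpha_n = \alpha_n + \alpha_n \in A \subseteq M$ for every $n \in \nn$, and that $\beta_\ell = \beta_{\ell+1} + \alpha_{\ell+1}$, so $2\beta_\ell = 2\beta_{\ell+1} + 2\alpha_{\ell+1}$ with $2\alpha_{\ell+1} \in M$; hence $2\beta_\ell + M \subseteq 2\beta_{\ell+1} + M$ for every $\ell$. This ascending chain of principal ideals does not stabilize: if $2\beta_\ell + M = 2\beta_{\ell+1} + M$, then $2\beta_\ell$ and $2\beta_{\ell+1}$ would be associates, hence equal since $M$ is reduced, forcing $\alpha_{\ell+1} = 0$. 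Thus $\big( 2\beta_\ell + M \big)_{\ell \in \nn}$ witnesses that $M$ fails the ACCP. I expect the only real obstacle to be pinpointing the invariant $|x| - s(x)$ (equivalently, the number of $A$-generators consumed) that simultaneously obstructs nontrivial factorizations of the elements of $A$ and of $B$; the remaining steps are routine bookkeeping with the coordinate decomposition.
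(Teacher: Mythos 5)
Your proof is correct and follows essentially the same route as the paper: both arguments rest on the unique coordinates coming from the $\qq$-linear independence of $\{1,\alpha_n \mid n \in \nn\}$, the observation that every nonzero element of $\langle A\rangle$ must have some $\alpha$-coordinate at least $2$, and the non-stabilizing chain $(2\beta_n + M)_{n \in \nn}$. Your invariant $|x|-s(x)$ and your use of the $1$-coordinate (where the paper instead uses the size bound $\beta > \tfrac{1}{2}$ to limit the number of $B$-atoms in a factorization) are only minor repackagings of the same ideas.
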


\begin{proof}
	Since $\{1, \alpha_n \mid n \in \nn \}$ is linearly independent over $\qq$, none of the elements in $B$ can divide any element of $A$ in $M$. Now it follows from the linearly independence of $(\alpha_n)_{n \in \nn}$ that $\alpha \notin \langle A \setminus \{\alpha\} \rangle$ for any $\alpha \in A$. Thus, $A \subseteq \mathcal{A}(M)$. Because $B \subset (\frac 12, 1]$, if we express $\beta_k \in B$ as the addition of elements of $A \cup B$, then at most one element $\beta_\ell \in B$ can appear in such an expression and, therefore, $\ell \ge k$. In this case, we see that $\sum_{i=k+1}^\ell \alpha_i \in \langle A \rangle$, which can only happens if $k = \ell$. Hence $B \subseteq \mathcal{A}(M)$ and, as a result, we obtain that $M$ is an atomic monoid with $\mathcal{A}(M) = A \cup B$. To argue the second statement, it suffices to observe that $(2\beta_n + M)_{n \in \nn}$ is an ascending chain of principal ideals of $M$ that does not stabilize: this is because $2 \beta_n = 2 \beta_{n+1} + 2 \alpha_{n+1}$ for every $n \in \nn$.
\end{proof}

In order to establish the main result of this section, we need the next two lemmas.

\begin{lemma} \label{lem:auxiliar}
	Suppose that $x := \sum_{i=1}^n c_i \alpha_i$ for some $c_1, \dots, c_n \in \nn_0$. If $\min\{c_j, c_k\} \ge 2$ for different $j,k \in \ldb 1, n \rdb$, then $x \in \langle A \rangle$.
\end{lemma}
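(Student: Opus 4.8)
The plan is to decompose $x$ into a sum of an element of $\langle A\rangle$ and a leftover of the form $\sum_i e_i\alpha_i$ with all $e_i\in\{0,1\}$, by repeatedly peeling off elements of $A$. Recall that a generic element of $A$ has the shape $2\alpha_{j_k}+\sum_{i\ne k}\alpha_{j_i}$ with the $j_i$ strictly increasing, i.e.\ it is a sum of distinct $\alpha$'s with exactly one of them doubled. So any $\alpha_p+\alpha_q$ with $p\ne q$ does \emph{not} itself lie in $A$, but $2\alpha_p+\alpha_q$ does, and more generally $2\alpha_p+(\text{sum of distinct }\alpha\text{'s not equal to }\alpha_p)$ does.

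The key observation is the hypothesis $\min\{c_j,c_k\}\ge 2$ for two distinct indices $j,k$. First I would reduce to the case where the coefficient vector $(c_1,\dots,c_n)$ is ``small'': whenever some $c_i\ge 3$, write $c_i\alpha_i=(c_i-2)\alpha_i+2\alpha_i$ and note $2\alpha_i\in A$ (it is the element of $A$ with $\ell=1$, $j_1=i$, $k=1$). Iterating, we may assume every $c_i\le 2$; moreover if $c_i=2$ but $c_i$ is not one of our two special indices, we can still afford to split off $2\alpha_i\in A$, so we may assume $c_i\in\{0,1,2\}$ and the set $T:=\{i: c_i=2\}$ satisfies $|T|\ge 2$ (it contains $j$ and $k$ after this reduction, or at worst we have already extracted enough copies of $2\alpha_i$ to land in $\langle A\rangle$ outright). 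Let $U:=\{i:c_i=1\}$, disjoint from $T$.

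Now I claim $x=\sum_{i\in T}2\alpha_i+\sum_{i\in U}\alpha_i$ lies in $\langle A\rangle$. Write $T=\{t_1<t_2<\dots<t_m\}$ with $m=|T|\ge2$. Pair the doubled terms cyclically: for each consecutive pair I form an element of $A$ that carries one ``double'' and one ``single''. Concretely, $2\alpha_{t_1}+\alpha_{t_2}\in A$, and $2\alpha_{t_2}+\alpha_{t_3}\in A$, \dots, $2\alpha_{t_{m-1}}+\alpha_{t_m}\in A$, and finally $2\alpha_{t_m}+\alpha_{t_1}\in A$; summing these $m$ elements gives exactly $\sum_{i\in T}2\alpha_i+\sum_{i\in T}\alpha_i$, which equals $\sum_{i\in T}3\alpha_i$. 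That overshoots, so instead I would do the following cleaner version: use $2\alpha_{t_1}+\alpha_{t_2}+\sum_{i\in U}\alpha_i\in A$ (all these indices are distinct, exactly one is doubled, so this is a legitimate element of $A$ provided $t_2\notin U$, which holds since $T\cap U=\varnothing$), and then handle the remaining $2\alpha_{t_2}+2\alpha_{t_3}+\dots+2\alpha_{t_m}+\alpha_{t_1}$: here $\alpha_{t_1}$ is a single and $\alpha_{t_2},\dots,\alpha_{t_m}$ are doubles, so pair them as $2\alpha_{t_2}+\alpha_{t_1}\in A$, $2\alpha_{t_3}+\alpha_{t_4}\in A$, \dots leaving at the end possibly a single leftover $2\alpha_{t_m}$ or a pair $2\alpha_{t_{m-1}}+2\alpha_{t_m}=(2\alpha_{t_{m-1}}+\alpha_{t_m})+\alpha_{t_m}$ and $2\alpha_{t_m}\in A$ absorbs the rest. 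The bookkeeping here is the only delicate point: one must ensure that at each step the element pulled out has \emph{exactly one} doubled index and otherwise distinct indices, and that no index is reused across the summands of $\langle A\rangle$ in a way that would force a coefficient above what we have.

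The main obstacle is precisely this combinatorial matching: organizing the doubles in $T$ and the singles in $U$ so that they can be partitioned into blocks each of which is a valid member of $A$ (one double, rest distinct singles), with no leftover. The cleanest route is induction on $\sum_i c_i$: the base case $T=\{j,k\}$, $U=\varnothing$ is $x=2\alpha_j+2\alpha_k=(2\alpha_j+\alpha_k)+\alpha_k$, and since $2\alpha_j+\alpha_k\in A$ this is \emph{not} yet in $\langle A\rangle$ unless $\alpha_k\in\langle A\rangle$ — so in fact the truly minimal case needs $2\alpha_j+2\alpha_k=(2\alpha_j+\alpha_k)+(2\alpha_k+\alpha_j)-(\alpha_j+\alpha_k)$, which does not work additively. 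This shows the honest base case requires at least, say, three elements or a single companion; I would therefore split into the sub-case $U\ne\varnothing$ (peel $2\alpha_j+\alpha_u$ for some $u\in U$, wait—that still leaves $2\alpha_k$ which is in $A$, done when $|T|=2$: $x=(2\alpha_j+\alpha_u+\text{other singles})+2\alpha_k$, all in $A$!) and the sub-case $U=\varnothing$, $|T|\ge2$ (then $x=2\alpha_{t_1}+\alpha_{t_2}+2\alpha_{t_3}+\dots$; but $2\alpha_{t_1}+\alpha_{t_2}\in A$ and $2\alpha_{t_2}$ is \emph{not} available — instead use $|T|\ge 2$ to write $x=(2\alpha_{t_1}+\alpha_{t_2})+(2\alpha_{t_2}+\alpha_{t_1})+2\alpha_{t_3}+\dots+2\alpha_{t_m}-2(\alpha_{t_1}+\alpha_{t_2})$, no). The resolution: when $U=\varnothing$ and $|T|=2$ we genuinely have $x=2\alpha_j+2\alpha_k$, and I claim this \emph{is} in $\langle A\rangle$ via $x=(2\alpha_j+\alpha_k)+(\alpha_k)$ — no. So the lemma's hypothesis must actually guarantee more; rereading, the intended proof surely extracts $2\alpha_i\in A$ for all $i$ with $c_i\ge 2$ and then what remains is $\sum e_i\alpha_i$ with $e_i\in\{0,1\}$, and one still needs this remainder to be in $\langle A\rangle$, which requires the remainder to be nonempty and ``pairable'' — here is where $\min\{c_j,c_k\}\ge 2$ enters: after pulling $2\alpha_j,2\alpha_k\in A$ we have $2$ extra copies $\alpha_j,\alpha_k$ available, and $2\alpha_j+\alpha_k\in A$, so $x=\underbrace{(2\alpha_j+\alpha_k)}_{\in A}+\underbrace{(\alpha_k+\text{rest})}$... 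I will resolve the remainder by: $x-(2\alpha_j+\alpha_k)=\sum$(nonneg combo), and if that residue has any index with coefficient $\ge2$ recurse, else it has all coefficients in $\{0,1\}$ including a $1$ at $k$, and I append $2\alpha_k+(\text{that residue})\in A$ — wait the residue already used $\alpha_k$ once so total is fine. This final packaging is the crux; everything else is the routine ``divide out $2\alpha_i$'' reduction.
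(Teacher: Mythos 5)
Your proposal assembles the right raw materials --- $2\alpha_i \in A$ (the case $\ell = 1$ of the definition of $A$) and, more generally, $2\alpha_p + (\text{sum of distinct } \alpha_i \text{ with } i \neq p) \in A$ --- but it never consolidates them into one argument that works, and two of its pivotal steps fail as written. First, the configuration $x = 2\alpha_j + 2\alpha_k$, which you single out as the ``truly minimal'' obstruction and leave unresolved (``$x=(2\alpha_j+\alpha_k)+\alpha_k$ --- no''), is immediate: $x = (2\alpha_j) + (2\alpha_k) \in A + A$, using exactly the fact $2\alpha_i \in A$ that you recorded at the outset and then lost track of. Second, your closing recursion (``subtract $2\alpha_j+\alpha_k$ and, if the residue has an index with coefficient $\ge 2$, recurse'') is not well-founded: the residue need not satisfy the hypothesis of the lemma. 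For $x = 2\alpha_j + 4\alpha_k$ the residue is $3\alpha_k$, which has only one index with coefficient $\ge 2$ and in fact does not lie in $\langle A \rangle$ at all: by linear independence of the $\alpha_i$, any sum of elements of $A$ equal to a multiple of $\alpha_k$ must have every summand equal to $2\alpha_k$, so only even multiples of $\alpha_k$ lie in $\langle A\rangle$. So the inductive call is not licensed, and on some inputs the quantity you recurse on is genuinely outside $\langle A\rangle$ even though $x$ itself is inside (here $x=(2\alpha_j)+(2\alpha_k)+(2\alpha_k)$). The same hazard undermines your opening reduction ``peel off $2\alpha_i$ until all $c_i\le 2$'': peeling indiscriminately can strand a nonzero all-singles remainder $\sum_{i}e_i\alpha_i$ with $e_i\in\{0,1\}$, which is never in $\langle A\rangle$; you notice this danger but never repair it.

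The repair --- and this is what the paper's proof does --- is to control the order of peeling by an induction on the weight $\omega(x) = \sum_i \max\{c_i-1,0\}$, always preserving the invariant that the remainder still has two distinct indices with coefficient at least $2$. One subtracts $2\alpha_j$ only when at least three indices have coefficient $\ge 2$, or when some coefficient is $\ge 4$; the terminal configurations ($\omega(x)\in\{2,3\}$, and the sub-case $c_j=c_k=3$) are finished by hand, with the last surviving doubled index absorbing all leftover singles into a single element of $A$, e.g.\ $x = (2\alpha_j) + \bigl(2\alpha_k + \sum_{c_i=1}\alpha_i\bigr)$ when $\omega(x)=2$, and $x = (2\alpha_j+\alpha_k) + \bigl(\alpha_j + 2\alpha_k + \sum_{c_i=1}\alpha_i\bigr)$ when $c_j=c_k=3$. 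Your draft circles this idea of reserving one double to absorb the singles, but it does not pin down a terminating procedure, so as it stands it is not a proof.
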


\begin{proof}
	Let $S$ be the set of elements $\sum_{i=1}^n c_i \alpha_i$ with $c_1, \dots, c_n \in \nn_0$ such that $\min\{c_j, c_k\} \ge 2$ for some $j,k \in \ldb 1, n \rdb$ with $j \neq k$. For each $x := \sum_{i=1}^n c_i \alpha_i  \in S$, set $\omega(x) = \sum_{i=1}^n \max\{c_i - 1, 0\}$. We will show that every $S \subseteq \langle A \rangle$ by induction on $\omega(x)$. If $\omega(x) = 2$, then exactly two of the coefficients $c_1, \dots, c_n$ equal $2$ and the rest belong to $\{0,1\}$. Thus, if $\alpha_j = 2$, then $x = 2 \alpha_j + (x - 2 \alpha_j) \in A + A \subseteq \langle A \rangle$. If $\omega(x) = 3$, then either exactly three of the coefficients $c_1, \dots, c_n$ equal $2$ and the rest belong to $\{0,1\}$ or two of the coefficients $c_1, \dots, c_n$ equal $2$ and $3$ and the rest are zero. In the former case, if $c_j = 2$ and $c_k = 2$ for $j \neq k$, then $x = 2 \alpha_j + 2\alpha_k + (x - 2\alpha_j - 2\alpha_k) \in A + A + A \subseteq \langle A \rangle$. In the latter case, if $c_j = 2$ and $c_k = 3$, then $x = (2 \alpha_j + \alpha_k) + (x - 2\alpha_j - \alpha_k) \in A + A \subseteq \langle A \rangle$.
	\smallskip
	
	Now suppose that $s \in \langle A \rangle$ for every $s \in S$ with $\omega(s) < n$, and take  $x \in S$ with $\omega(x) = n \ge 4$. We split the rest of the proof into the following two cases.
	\smallskip
	
	Case 1: There exist pairwise different indices $j,k,\ell \in \ldb 1, n \rdb$ with $\min \{ c_j, c_k, c_\ell \} \ge 2$. In this case, we see that $x - 2\alpha_j \in S$ and $\omega(x - 2 \alpha_j) \ge 2$. Thus, $x - 2 \alpha_j \in \langle A \rangle$ by induction hypothesis, and so $x = 2 \alpha_j + (x - 2 \alpha_j) \in A + \langle A \rangle \subseteq \langle A \rangle$.
	\smallskip
	
	Case 2: There exist exactly two distinct indices $j, k \in \ldb 1,n \rdb$ such that $c_j \ge 2$ and $c_k \ge 2$. From $\omega(x) \ge 4$, one deduces that $c_j + c_k \ge 6$. If $c_j = c_k = 3$, then $2 \alpha_j + \alpha_k$ and $x - (2 \alpha_j + \alpha_k)$ both belong to~$A$, whence $x \in A + A \subseteq \langle A \rangle$. Otherwise, we can assume that $c_j \ge 4$. Since $\omega(x - 2 \alpha_j) = \omega(x) - 2 < n$, it follows by induction that $x - 2 \alpha_j \in \langle A \rangle$, and so $x = 2 \alpha_j + (x - 2 \alpha_j) \in A + \langle A \rangle \subseteq \langle A \rangle$.
\end{proof}

From the definition of the sequence $(\alpha_n)_{n \in \nn}$, we deduce that the set $\{1, \alpha_n \mid n \in \nn\}$ is a $\zz$-module basis for the abelian group $G := \zz + \sum_{n \in \nn} \zz \alpha_n$ they generate. Thus, the map $\psi \colon G \to \zz$ given by
\begin{equation*} \label{eq:map psi}
	 \psi \Big( c_0 + \sum_{i=1}^k c_i \alpha_i \Big) = \sum_{i=1}^k \max\{c_i,0\},
\end{equation*}
where $c_0, \dots, c_k \in \zz$, is well defined.

\begin{lemma}  \label{lem:weight inequality}
	The following statements hold.
	\begin{enumerate}
		\item $B + \langle A \rangle = (1 + \sum_{n \in \nn} (\zz_{\ge -1}) \alpha_n) \cap M$.
		\smallskip
		
		\item 
		If $x \in B + \langle A \rangle$, then $\psi(x+a) \ge \psi(x) + 1$ for every $a \in A$.
	\end{enumerate}
\end{lemma}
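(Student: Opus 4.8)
I'll work in the abelian group $G = \zz + \sum_{n \in \nn} \zz\alpha_n$ with $\zz$-basis $\{1, \alpha_n \mid n \in \nn\}$, and use the map $\psi$ defined just before the lemma. The two elements $\psi$ ignores are the integer coordinate $c_0$ and any negative $\alpha_n$-coordinates; it sums the nonnegative $\alpha_n$-coordinates. An element of $B + \langle A\rangle$ can be written as $\beta_\ell + \sum_i d_i\alpha_i$ with $\beta_\ell = 1 - \sum_{i=1}^\ell \alpha_i$ (or $\beta_0 = 1$) and all $d_i \ge 0$ — so its $\alpha_n$-coordinates are each $\ge -1$, and its integer coordinate is $1$. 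This already suggests the shape of part (1).

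**Part (1).** The inclusion $B + \langle A\rangle \subseteq (1 + \sum_n \zz_{\ge -1}\alpha_n)\cap M$ is immediate from the previous paragraph: every generator of $\langle A\rangle$ has nonnegative $\alpha$-coordinates and integer coordinate $0$, and each $\beta \in B$ has integer coordinate $1$ and $\alpha$-coordinates in $\{0, -1\}$, so any sum lies in the stated set, and of course it lies in $M$. For the reverse inclusion, take $x \in M$ with integer coordinate $1$ and all $\alpha_n$-coordinates $\ge -1$. Since $x \in M = \langle A \cup B\rangle$, write $x$ as a nonnegative combination of atoms; because the integer coordinate is $1$ (not $0$ and not $\ge 2$), exactly one atom from $B$ appears, say $\beta_\ell$, and it appears with multiplicity one (two copies of any $\beta$ would push the integer coordinate to $2$; here I use $\beta_0 = 1$ too). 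Thus $x = \beta_\ell + y$ where $y \in \langle A\rangle$, which is what we want. The one subtlety is confirming that no $B$-atom other than $\beta_\ell$ sneaks in — but any two distinct $\beta$'s already sum to integer coordinate $2$, so at most one appears; combined with multiplicity one this pins down $x \in \beta_\ell + \langle A\rangle \subseteq B + \langle A\rangle$.

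**Part (2).** Fix $x \in B + \langle A\rangle$ and $a \in A$; I want $\psi(x + a) \ge \psi(x) + 1$. By part (1), write $x = 1 + \sum_n e_n\alpha_n$ with each $e_n \ge -1$. Write $a = 2\alpha_k + \sum_{i \in T}\alpha_i$ where $k \notin T$ (the defining form of elements of $A$: one index with coefficient $2$, finitely many others with coefficient $1$). Then $x + a$ has $\alpha_n$-coordinate $e_n + 2$ at $n = k$, $e_n + 1$ for $n \in T$, and $e_n$ otherwise. Now $\psi(x+a) - \psi(x) = \sum_n \big(\max\{f_n, 0\} - \max\{e_n, 0\}\big)$ where $f_n$ is the new coordinate. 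For $n = k$: since $e_k \ge -1$, we have $e_k + 2 \ge 1 > 0$, so the contribution is $(e_k+2) - \max\{e_k,0\}$, which equals $2$ if $e_k \ge 0$ and equals $1$ if $e_k = -1$ — in either case $\ge 1$. For $n \in T$: $e_n + 1 \ge \max\{e_n,0\}$ always (check $e_n = -1$: $0 \ge 0$; $e_n \ge 0$: $e_n + 1 > e_n$), so each such term is $\ge 0$. For all other $n$ the term is $0$. Summing, $\psi(x+a) - \psi(x) \ge 1$.

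**Main obstacle.** There's no deep obstacle; the proof is bookkeeping in the coordinate basis. The one place to be careful is the reverse inclusion in part (1) — specifically the argument that the integer coordinate being exactly $1$ forces exactly one $B$-atom with multiplicity one — and in part (2) the case analysis at $n = k$ when $e_k = -1$, which is precisely where the hypothesis $x \in B + \langle A\rangle$ (rather than $x \in M$) is used: it guarantees $e_k \ge -1$ rather than allowing arbitrarily negative coordinates, and $-1 + 2 = 1 > 0$ is exactly what makes the index $k$ contribute positively to $\psi$.
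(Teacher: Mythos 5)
Your proof is correct and follows essentially the same route as the paper's: part (1) by reading off coordinates in the basis $\{1, \alpha_n \mid n \in \nn\}$ for the forward inclusion and invoking atomicity plus linear independence to force exactly one $B$-atom for the reverse inclusion, and part (2) by the coordinate-wise comparison of $\max\{\cdot,0\}$ with the key gain of $+1$ coming from the doubled index (where $c_j \ge -1$ gives $\max\{c_j+2,0\} \ge \max\{c_j,0\}+1$). Your write-up is, if anything, slightly more explicit about the multiplicity-one argument and the case split at the doubled index than the paper's.
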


\begin{proof}
	(1) Take $x \in B + \langle A \rangle$ and write $x = \beta + \sum_{i=1}^m a_i \in M$ for some $\beta \in B$ and $a_1, \dots, a_m \in A$. After taking $\ell \in \nn$ with $\beta = 1 - \sum_{i=1}^\ell \alpha_i$, we see that $x = 1 + \sum_{i=1}^m a_i - \sum_{i=1}^\ell \alpha_i \in 1 + \sum_{n \in \nn}(\zz_{\ge -1}) \alpha_n$. Thus, $B + \langle A \rangle \subseteq (1 + \sum_{n \in \nn} (\zz_{\ge -1}) \alpha_n) \cap M$. Conversely, suppose that $x = 1 + \sum_{i=1}^k c_i \alpha_i \in M$ for some $c_1, \dots, c_k \in \zz_{\ge -1}$. Since $M$ is atomic with $\mathcal{A}(M) = A \cup B$ by Proposition~\ref{prop:monoid atomic without ACCP}, the fact that $\{1, \alpha_n \mid n \in \nn\}$ is linearly independent over $\qq$ guarantees that when we write $x$ as a sum of atoms in $M$	exactly one atom of $B$ will show as a summand, which implies that $x \in B + \langle A \rangle$. 
	Hence $(1 + \sum_{n \in \nn} (\zz_{\ge -1}) \alpha_n) \cap M \subseteq B + \langle A \rangle$.
	\smallskip
	
	(2) Take $x \in B + \langle A \rangle$ and $a \in A$. In light of part~(1), we can write $x := 1 + \sum_{i=1}^k c_i \alpha_i$ for some $c_1, \dots, c_k \in \zz_{\ge -1}$. Let $\alpha_j$ be the term in $(\alpha_n)_{n \in \nn}$ that appears twice in the linear combination defining $a$, and assume that $k \ge j$ by using zero coefficients $c_{k+1}, \dots, c_j$ if necessary. Since $c_j \ge -1$,
	\[
		\psi(x+a) \ge \max\{2+c_j, 0\} + \! \! \! \sum_{i \in \ldb 1,k \rdb \setminus \{j\}} \! \! \! \max\{c_i,0\} \ge 1+ \sum_{i=1}^k \max\{c_i,0\} = 1 + \psi (x).
	\]
%
\end{proof}

We are in a position to exhibit a class of atomic monoid domains that do not satisfy the ACCP.

\begin{theorem} \label{thm:atomic monoid algebra without the ACCP}
	For any field $F$, the monoid domain $F[M]$ is atomic but does not satisfy the ACCP.
\end{theorem}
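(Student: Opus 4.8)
\medskip

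The argument divides into the two claims, and the failure of the ACCP is the short part. Since $M$ is a submonoid of $\rr_{\ge 0}$ it is reduced, so $\uu(F[M]) = F^\times$, and the relation $2\beta_n = 2\beta_{n+1} + 2\alpha_{n+1}$ appearing in the proof of Proposition~\ref{prop:monoid atomic without ACCP} shows that $\big( X^{2\beta_n} F[M] \big)_{n \in \nn}$ is an ascending chain of principal ideals; it is strict because $2\beta_{n+1} - 2\beta_n = -2\alpha_{n+1} \notin M$. All the difficulty thus lies in proving that $F[M]$ is atomic.

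I would begin with two routine reductions. Fixing a total order on $M$ compatible with its monoid operation (as the excerpt allows), the maps $f \mapsto \deg f$ and $f \mapsto \operatorname{ord} f$ are additive on $F[M]^\ast$; hence for $a \in \mathcal{A}(M)$ an equation $X^a = fg$ forces $(\deg f - \operatorname{ord} f) + (\deg g - \operatorname{ord} g) = 0$ with both summands nonnegative, so $f$ and $g$ are monomials, and then $u + v = a$ for their exponents forces $u = 0$ or $v = 0$, i.e.\ $f$ or $g$ is a unit. Thus $X^a$ is irreducible for every $a \in \mathcal{A}(M)$, and combined with Proposition~\ref{prop:monoid atomic without ACCP} this shows every monomial of $F[M]$ factors into irreducibles; it remains to handle $f \in F[M]$ with $|\supp f| \ge 2$.

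The structural input is threefold. Let $\tau \colon M \to \nn_0$ be the homomorphism returning the coefficient of $1$ in the $\{1, \alpha_n\}$-expansion, so $\tau(m)$ counts the atoms of $B$ in any factorization of $m$ and $\langle A \rangle = \tau^{-1}(0)$. First, since $\psi$ is additive on $\langle A \rangle$ with $\psi(a) \ge 2$ for $a \in A$ and $\deg$ is additive on $F[\langle A \rangle]^\ast$, every factorization of a nonunit $g \in F[\langle A \rangle]$ into nonunits has length at most $\tfrac12 \psi(\deg g)$, so $F[\langle A \rangle]$ is a BFD and, in particular, satisfies the ACCP. Second, any element of $M$ smaller than $\tfrac12$ lies in $\langle A \rangle$ (an element of positive $\tau$ exceeds $\tfrac12$), so any $g \in F[M]$ with $\deg g < \tfrac12$ belongs to $F[\langle A \rangle]$. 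The crux is then a uniform bound: for a fixed nonzero nonunit $f$, the number of non-monomial factors in any factorization $f = g_1 \cdots g_k$ of $f$ into nonunits is at most some $N(f)$. By additivity of $\tau$ applied to $\deg f$ and $\operatorname{ord} f$, at most $\tau(\deg f) + \tau(\operatorname{ord} f)$ of the $g_i$ can have $\tau(\deg g_i) \ge 1$ or $\tau(\operatorname{ord} g_i) \ge 1$; every remaining non-monomial factor has both order and degree in $\langle A \rangle$, the product of these factors has degree in $\langle A \rangle$ dividing $\deg f$, and here the additivity of $\psi$ on $\langle A \rangle$ together with Lemmas~\ref{lem:auxiliar} and \ref{lem:weight inequality} — used to control $\psi$ along those divisors of $\deg f$ that can actually arise — bounds the number of such factors. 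Granting $N(f)$, atomicity follows by choosing a factorization of $f$ into nonunits realizing the maximal number of non-monomial factors and showing that each such factor can be taken irreducible: a reducible non-monomial factor must split as (non-monomial)$\,\times\,$(monomial), and the tail of such a splitting process falls into the BFD $F[\langle A \rangle]$ — the monomials peeled off have degrees summing to at most $\deg f$ — where the ACCP forces termination with an irreducible; the leftover monomials are resolved by the first reduction.

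The main obstacle I anticipate is the combinatorial bound on the non-monomial factors whose order and degree both lie in $\langle A \rangle$: because $\psi$ is only subadditive on $M$ and the atoms $\beta_\ell$ contribute long runs of coordinate $-1$, an element of $\langle A \rangle$ dividing $\deg f$ need not have bounded $\psi$ in general, so the count has to exploit the precise description of $\langle A \rangle$ from Lemma~\ref{lem:auxiliar} and the particular shape of the divisors occurring in a factorization of $f$ (rather than $\deg f$ alone). The remaining delicate point is making the concluding argument terminate — an infinite strictly ascending chain of principal ideals of $F[M]$ is not a priori excluded — which is exactly why one must push the relevant chain down into $F[\langle A \rangle]$, where the ACCP is available.
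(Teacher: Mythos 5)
Your treatment of the failure of the ACCP, the irreducibility of the monomials $X^a$ for $a \in \mathcal{A}(M)$, and the observation that $F[\langle A\rangle]$ is a BFD (via the additivity of $\psi$ on $\langle A\rangle$ and $\psi(a)\ge 2$ for $a\in A$) are all correct, and the last of these plays essentially the role of the BFD $F[M_\alpha]$ in the paper. But the two steps that actually carry the atomicity argument are both left open, and you acknowledge as much. First, the uniform bound $N(f)$ on the number of non-monomial factors: after discarding the at most $\tau(\deg f)+\tau(\operatorname{ord} f)$ factors whose degree or order involves $B$, you must bound $\psi(D)$, where $D\in\langle A\rangle$ is the sum of the degrees of the remaining factors and divides $\deg f$ in $M$; but, as you yourself note, such a $D$ can have arbitrarily large $\psi$ (if $\tau(\deg f)\ge 2$, the complement $\deg f - D$ can absorb $2\beta_\ell$ for large $\ell$, whose expansion has $-2$ in $\ell$ coordinates), so no bound depending on $\deg f$ and $\operatorname{ord} f$ alone exists and the claim is not established. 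Second, the terminal refinement: a reducible non-monomial factor does split off a monomial, but $M^\bullet$ has no positive lower bound and contains non-terminating divisor chains (each $2\beta_{n+1}$ properly divides $2\beta_n$ in $M$), so the peeling process need not terminate; the tail $h$ satisfies $\deg h \ge \deg g - \operatorname{ord} g > 0$ and its order can stay in $2B+\langle A\rangle$ forever, so it never falls into $F[\langle A\rangle]$ and the ACCP of that subring is never engaged.

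The idea you are missing is the paper's induction on the $B$-content of the support. Embedding $F[M]$ into $F[G][x]$, with $x$ recording the coefficient of $1$ in the $\{1,\alpha_n \mid n \in \nn\}$-expansion of each exponent (your $\tau$), one inducts on $n=\min\{\tau(\theta) \mid \theta\in\operatorname{supp} f\}$. When $n\ge 2$, Lemma~\ref{lem:auxiliar} lets one rewrite every exponent $\theta_i=a_i+\sum_j\beta_{i_j}$ as $2\beta_{m+2}+(n_i-2)\beta_m+a_i'$ with $a_i'\in\langle A\rangle$ (for $m$ beyond every index occurring in $f$), so $X^{2\beta_{m+2}}$ divides $f$ and the cofactor has $x$-order $n-2$. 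This detour is forced: elements of $x$-order at least $2$ --- even the monomial $X^{2\beta_1}=X^{2\beta_k}X^{2\alpha_k}\cdots X^{2\alpha_2}$ --- have unbounded factorization lengths, so no maximal-length or maximal-refinement argument can apply to them directly. Only the base cases $n=0$ and $n=1$ require length bounds, obtained respectively from the constant term $f(0)\in F[M_\alpha]$ and from $\deg_{F[G]}f'(0)$ together with Lemma~\ref{lem:weight inequality}(2). Your $\tau$, $\psi$, and $F[\langle A\rangle]$ are the right tools for those base cases, but without the order-reduction step the argument does not close.
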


\begin{proof}
	Let $G$ be the smallest subgroup of $(\rr,+)$ containing the sequence $(\alpha_n)_{n \in \nn}$. Observe that every $b \in M$ can be uniquely expressed as $b = m_0 + \sum_{i=1}^n m_i \alpha_i$ for some $m_0 \in \nn_0$ and $m_1, \dots, m_n \in \zz$. Therefore~$M$ can be embedded into the monoid $\nn_0 \times G$ by the assignment $b \mapsto (m_0, b - m_0)$, and so we can identify $F[M]$ with a subring of the polynomial ring $R[x]$ over the group algebra
	\[
		R := F[G] \cong F[X_n^{\pm 1} \mid n \in \nn],
	\]
	where the identification is the canonical isomorphism given by the assignments $Y \mapsto x$ and $Y^{\alpha_n} \mapsto X_n$ for every $n \in \nn$ (here $Y$ is the indeterminate of $F[M]$). For every $F[G]$-monomial $\prod_{n \in \nn} X_n^{m_n}$, where $m_n = 0$ for all but finitely many $n \in \nn_0$, we say that $\deg_{F[G]} \prod_{n \in \nn} X_n^{m_n} := \sum_{n \in \nn} m_n \alpha_n$ is the total degree of $\prod_{n \in \nn} X_n^{m_n}$. Accordingly, for every $f \in F[M]$, the notation $\deg f$ (resp., $\text{ord} \, f$) will refer to the degree (resp., order) of $f$ as a polynomial in $R[x]$. Since $R$ is a Laurent polynomial ring over a field, it is a UFD. 
	\medskip
	
	\noindent {\it Claim 1:} For each $f \in F[M]$ with $\text{ord} \, f = 0$, there exists $N \in \nn$ such that $f$ cannot be written as a product of more than $N$ elements of $F[M] \setminus F$.
	\smallskip
	
	\noindent {\it Proof of Claim 1:} Let $M_\alpha$ be the submonoid of $G$ generated by $(\alpha_n)_{n \in \nn}$. Note that $F[M_\alpha]$ is a subring of~$R$ and also that $F[M_\alpha]^\times = F^\times$. For each $g \in F[M] \subseteq R[x]$, we observe that $g(0)$ is a sum of finitely many monomials $Y^b \in F[M]$ with $b \in M$ not divisible by any element of $B$ in $M$, which implies that $b \in M_\alpha$. Thus, $g(0) \in F[M_\alpha]$ for every $g \in F[M]$. Since $M_\alpha$ is a free commutative monoid, it follows from \cite[Proposition~3.14]{hK98} that $F[M_\alpha]$ is a BFD and, therefore, $\sup \mathsf{L}_{F[M_\alpha]}(f(0)) < N - \deg f$ for some $N \in \nn$. Write $f = g_1 \cdots g_n$ for some $g_1, \dots, g_n \in F[M] \setminus F$, assuming that for some $k \in \ldb 1,n \rdb$ the inequality $\deg g_i \ge 1$ holds if and only if $i \in \ldb k, n \rdb$. Clearly, $n-k < \deg f$. If $k=1$, then $n \le \deg f < N$. Suppose, on the other hand, that $k > 1$. Since $g_i = g_i(0) \in F[M_\alpha] \setminus F$ for every $i \in \ldb 1, k-1 \rdb$, the fact that $g_1(0) \cdots g_k(0)$ divides $f(0)$ in $F[M_\alpha]$, which is a BFD, ensures that $k < N - \deg f$. Hence $n = k + (n-k) \le (N - \deg f) + \deg f = N$.
	\medskip
	
	\noindent {\it Claim 2:} For each $f \in F[M]$ with $\text{ord} \, f = 1$, there exists $N \in \nn$ such that $f$ cannot be written as a product of more than $N$ elements of $F[M] \setminus F$.
	\smallskip
	
	\noindent {\it Proof of Claim 2:} 
	Let $f'$ be the formal derivative of $f$ when considered as a polynomial in $R[x]$, and set $d := \deg_{F[G]} f'(0)$. 
	From the equality $\text{ord} \, f = 1$, we obtain that $1 + d$ has the form $\beta + \sum_{i=1}^m a_i$ for some $\beta \in B$ and $a_1, \dots, a_m \in A$, and so we can take $N \in \nn$ such that $\psi(1 + d) + \deg f < N$. Write $f = g_0 \cdots g_n$ for some $g_0, \dots, g_n \in F[M] \setminus F$, and let us show that $n \le N$. Because $\text{ord} \, f = 1$, we can assume, without loss of generality, that $\text{ord} \, g_0 = 1$ and $g_1(0) \cdots g_n(0) \neq 0$. After relabeling $g_1, \dots, g_n$ if necessary, we can further assume the existence of $k \in \ldb 2, n \rdb$ such that $\deg g_i \ge 1$ if and only if $i \in \ldb k+1, n \rdb$, and so $g_i = g_i(0) \in F[M_\alpha]$ for every $i \in \ldb 1, k \rdb$. Since $\text{ord} \, g_0 = 1$, we see that $n-k < \deg f$. Considering the coefficients of the monomials of degree~$1$ in both sides of $f = g_0 \cdots g_n$, we see that $f'(0) = g'_0(0) g_1(0) \cdots g_n(0)$ in $F[G]$. Set $d_0 := \deg_{F[G]} g'_0(0)$ and $d_i := \deg_{F[G]} g_i(0)$ for every $i \in \ldb 1, n \rdb$. As $\text{ord} \, g_0 = 1$, we can write $1 + d_0 = \beta' + \sum_{i=1}^\ell a'_i$ for some $\beta' \in B$ and $a'_1, \dots, a'_\ell \in A$. On the other hand, for every $i \in \ldb 1,k \rdb$, the fact that $g_i(0) \in F[M_\alpha] \setminus F$ implies that $d_i \in M_\alpha^\bullet$. Thus, in light of Lemma~\ref{lem:weight inequality}, we obtain
	\[
		\psi(1+d) \ge \psi\bigg( 1+ d_0 + \sum_{i=1}^k d_i \bigg) = \psi \bigg( \beta' + \sum_{i=1}^\ell a'_i + \sum_{i=1}^k d_i \bigg) \ge \psi(\beta') + k = k.
	\]
	As a result, $n = k + (n-k) < \psi(1+ d) + \deg f < N$, which completes the proof of our claim.
	\smallskip
	
	Now we are in a position to prove that $F[M]$ is an atomic domain. To do so, we proceed by induction on the order of elements of $F[M] \setminus F$ as polynomials in $R[x]$. Take $f \in F[M] \setminus F$. If $\text{ord} \, f \in \{0,1\}$, then it immediately follows from Claim~1 and Claim~2 that $f$ can be factored into irreducibles. Therefore suppose that $\text{ord} \, f = n \ge 2$, assuming that every element of $F[M] \setminus F$ whose order in $R[x]$ is less than $n$ can be factored into irreducibles. 
	
	Write $f = \sum_{i=1}^\ell c_iY^{\theta_i}$ for some $c_1, \dots, c_\ell \in F$ and $\theta_1, \dots, \theta_\ell \in M$. Now, for each $i \in \ldb 1, \ell \rdb$, write $\theta_i = a_i + \sum_{j=1}^{n_i} \beta_{i_j}$, for some $a_i \in \langle A \rangle$ and $\beta_{i_1}, \dots, \beta_{i_{n_i}} \in B$. Let $m$ be the largest index such that $\beta_m$ appears in the right-hand side of one of the equalities $\theta_i = a_i + \sum_{j=1}^{n_i} \beta_{i_j}$ (for every $i \in \ldb 1, \ell \rdb$). Now fix $i \in \ldb 1, \ell \rdb$ and then write
	\[
		a_i + \sum_{j=1}^{n_i} \beta_{i_j} = 2\beta_{m+2} + (n_i - 2) \beta_m + a_i'
	\]
	for some $a_i' \in \rr$. If we express both $a_i + \sum_{j=1}^{n_i} \beta_{i_j}$ and $2\beta_{m+2} + (n_i - 2) \beta_m$ as linear combinations of the elements in the linearly independent set $\{1, \alpha_n \mid n \in \nn\}$, then the coefficients of $1$ in both linear combinations are the same, namely, $n_i$. Therefore $a_i' \in G$. Furthermore,
	\[
		a_i' = a_i + 2(\beta_m - \beta_{m+2}) + \sum_{j=1}^{n_i} (\beta_{i_j} - \beta_m) = a_i + 2 \alpha_{m+1} + 2 \alpha_{m+2} + \sum_{j=1}^{n_i} \sum_{k=i_j+1}^m \alpha_k,
	\]
	and so it follows from Lemma~\ref{lem:auxiliar} that $a_i' \in \langle A \rangle$. This implies that $Y^{2 \beta_{m+2}}$ divides $f$ in $F[M]$, whence we can factor $f$ as $f = Y^{2 \beta_{m+2}} (f/Y^{2 \beta_{m+2}})$ in $F[M]$. Since $Y^\beta$ is an irreducible of $F[M]$ for every $\beta \in \mathcal{A}(M)$, the monomial $Y^{2\beta_{m+2}}$ factors into irreducibles in $F[M]$, namely,  $Y^{2\beta_{m+2}} = (Y^{\beta_{m+2}})^2$. On the other hand, observe that $\text{ord} \,  (f/Y^{2 \beta_{m+2}}) = (\text{ord} \, f) - 2 < n$, and so it follows from the induction hypothesis that $f/Y^{2 \beta_{m+2}}$ also factors into irreducibles in $F[M]$. Hence every $f \in F[M] \setminus F$ factors into irreducibles in $F[M]$, which means that $F[M]$ is an atomic domain.
	
	Finally, we observe that $F[M]$ cannot satisfy the ACCP as, otherwise, the monoid $M$ would also satisfy the ACCP by \cite[Proposition~1.4]{hK01}, which is not the case, as we have already seen in Proposition~\ref{prop:monoid atomic without ACCP}.
\end{proof}

\bigskip
\section*{Acknowledgments}

During the preparation of this paper, both authors were part of PRIMES-USA at MIT, and they would like to thank the organizers and directors of the program. Also, the authors thank Kent Vashaw for his careful reading and helpful typographical feedback on the last version of this paper. Finally, the first author kindly acknowledges support from the NSF under the award DMS-1903069.

\bigskip

\end{document}